\newtheorem{thm}{Theorem}[section]
\newtheorem{lemma}[thm]{Lemma}
\newtheorem{definition}[thm]{Definition}
\newcommand{\q}{\mathbb{Q}}
\newcommand{\R}{\mathbb{R}}
\newcommand{\cf}{\mathrm{cf}}
\newcommand{\cof}{\mathrm{cof}}
\newcommand{\dom}{\mathrm{dom}}
\newcommand{\add}{\textrm{Add}}
\newcommand{\Succ}{\mathrm{Succ}}
\newcommand{\h}{\mathrm{ht}}
\newcommand{\res}{\upharpoonright}
\newcommand{\qk}{\q_\kappa}
\newcommand{\qo}{<_{\q_\kappa}}
\newcommand{\rk}{\R_\kappa}
\newcommand{\ro}{<_{\R_\kappa}}
\newcommand{\ka}{\kappa}
\newcommand{\kp}{\kappa^+}
\begin{document}

\title{A Large Pairwise Far Family of Aronszajn Trees}

\author{John Krueger}

\address{John Krueger \\ Department of Mathematics \\ 
	University of North Texas \\
	1155 Union Circle \#311430 \\
	Denton, TX 76203}
\email{jkrueger@unt.edu}

\date{February 2021; Revised September 2022}

\thanks{2020 \emph{Mathematics Subject Classification}: 
	Primary 03E05, 03E65.}

\thanks{\emph{Key words and phrases}:  Aronszajn tree, club isomorphism, proxy principle}

\thanks{This material is based upon work supported by the Simons Foundation under Grant 631279}

\begin{abstract}
We construct a large family of normal $\ka$-complete $\rk$-embeddable non-special 
$\kp$-Aronszajn trees which 
have no club-isomorphic subtrees using an instance of the proxy principle of 
Brodsky-Rinot \cite{rinot22}. 
\end{abstract}

\maketitle

Two trees of the same height 
are said to be \emph{club isomorphic} if there exists a club subset of their height 
and an isomorphism between the trees restricted to that club. 
Abraham-Shelah \cite{AS} proved a number of essential results about 
club isomorphisms of $\omega_1$-Aronszajn trees. 
They showed that under \textsf{PFA}, any two normal $\omega_1$-Aronszajn trees are 
club isomorphic. 
In the other direction, they proved that the weak diamond principle on $\omega_1$ implies 
the existence of a family of $2^{\omega_1}$ 
many pairwise non-club-isomorphic $\omega_1$-Aronszajn trees. 
A natural problem is to generalize these theorems to higher Aronszajn trees. 
Krueger \cite{jk31} 
constructed a model in which any two normal countably closed $\omega_2$-Aronszajn trees are club isomorphic. 
Chavez-Krueger \cite{chavez} proved that for any regular uncountable cardinal 
$\ka$ satisfying $\ka^{<\ka} = \ka$ and $\Diamond(\kp \cap \cof(\ka))$, 
there exists a pairwise far family of $2^{(\kp)}$ many special $\kp$-Aronszajn 
trees, meaning that 
any two trees in the family have no club-isomorphic subtrees. 
The proof of the latter result used a technique of sealing potential club isomorphisms using diamond.

In the present article we build a pairwise far 
family of $2^{(\kp)}$ many normal $\ka$-complete 
$\rk$-embeddable $\kp$-Aronszajn trees, where 
$\ka$ is a regular uncountable cardinal satisfying $\ka^{<\ka} = \ka$. 
There are two main differences between this construction and that of \cite{chavez}. 
First, instead of using diamond, we build the trees using 
the proxy principle of Brodsky-Rinot \cite{rinot22}. 
This principle implies the existence of 
a useful combinatorial object which is a combination of a club guessing sequence 
and a diamond sequence, and has been used to construct a variety of 
Aronszajn and Suslin trees (\cite{rinot22}, \cite{rinot23}, \cite{rinot29}). 
Secondly, rather than sealing potential club isomorphisms, we create a more explicit 
distinction between the different trees by ensuring the non-existence of stationary 
antichains below certain stationary sets. 
In particular, in contrast to the main result of \cite{chavez}, the trees in our 
collection are non-special.

In Section 1 we provide the necessary information about the linear orders $\qk$ and $\rk$ which 
we use to build our trees, review a standard construction of a higher Aronszajn tree, 
and describe the material on the proxy principle which we will need. 
In Section 2 we show how the proxy principle can be used to produce two 
non-club-isomorphic Aronszajn trees; the general idea behind this result, which is repeated 
in Section 3, is to arrange the destruction of certain types of potential 
stationary antichains. 
In Section 3 we present the main result of the article, which is the construction of a large 
pairwise far family 
of Aronszajn trees which generalizes the proof of Section 2. 

I would like to thank Assaf Rinot for his helpful and extensive 
comments on an earlier draft of this article, and also 
the referee for many helpful suggestions and corrections.

\section{Aronszajn trees and the proxy principle}

We assume that the reader is familiar with the basic definitions and facts about trees, although 
we will carefully review the notation used in this article. 
Let $(T,<_T)$ be a tree. 
A \emph{chain} is a linearly ordered subset of $T$, and an \emph{antichain} 
is a set of pairwise incomparable elements of $T$. 
A \emph{branch} is a maximal chain. 
For each $x \in T$, 
let $\h_T(x)$ denote the height of $x$ in $T$. 
For each ordinal $\delta$, let 
$T(\delta) := \{ x \in T : \h_T(x) = \delta \}$ denote level $\delta$ of $T$, and 
$T \restriction \delta := \{ x \in T : \h_T(x) < \delta \}$. 
More generally, if $A$ is a subset of the height of a tree $T$, 
$T \restriction A := \{ x \in T : \h_T(x) \in A \}$. 
A branch $b$ of $T$ is \emph{cofinal} if $b \cap T(\delta) \ne \emptyset$ for all $\delta$ less 
than the height of $T$. 
For an infinite cardinal $\ka$, $T$ is \emph{$\ka$-complete} if every chain of $T$ whose order 
type is less than $\ka$ has an upper bound in $T$.

Let $\lambda$ be a regular uncountable cardinal. 
A \emph{$\lambda$-tree} is a tree of height $\lambda$ such that for all $\delta < \lambda$, 
$T(\delta)$ has size less than $\lambda$. 
A $\lambda$-tree is \emph{Aronszajn} if it has no cofinal branch and is 
\emph{Suslin} if it has no chains or antichains of size $\lambda$. 
If $\lambda = \mu^+$ is a successor cardinal, a tree $T$ of height $\lambda$  
is \emph{special} if there exists a function 
from $T$ to $\mu$ which is injective on chains. 
Note that a $\lambda$-tree being special implies that it is Aronszajn and not Suslin.
The next result can be proved by a simple pressing down argument.

\begin{lemma}
Let $\lambda = \mu^+$ be a successor cardinal and $T$ a $\lambda$-tree. 
If $S \subseteq \lambda$ is stationary and $T \restriction S$ is special, 
then there exists an antichain $A \subseteq T \restriction S$ 
such that the set $\{ \h_T(x) : x \in A \}$ is a stationary subset of $S$.
\end{lemma}

For a $\lambda$-tree $T$, a set $A \subseteq T$ is a \emph{stationary antichain} 
if $A$ is an antichain and the set $\{ \h_T(x) : x \in A \}$ is stationary in $\lambda$. 
If $S \subseteq \lambda$, $A$ is a \emph{stationary antichain below $S$} if it is 
a stationary antichain and $\{ \h_T(x) : x \in A \} \subseteq S$.

A \emph{subtree} of a tree $T$ is any subset $U$ of $T$ considered as a tree with the 
inherited order $<_T \cap \ (U \times U)$. 
A set $X \subseteq T$ is \emph{downwards closed} if for all $x \in X$, 
$\{ y \in T : y <_T x \} \subseteq X$. 
Note that branches are downwards closed.
If $U$ is a downwards closed subtree of $T$, then for all $x \in U$, $\h_T(x) = \h_U(x)$. 
For any set $Y \subseteq T$, the \emph{downward closure} of $Y$ is the set 
$\{ z \in T : \exists y \in Y \ z \le_T y \}$. 

A $\lambda$-tree $T$ is \emph{normal} if:
\begin{enumerate}
\item $T$ has a unique element of height $0$, which is called the \emph{root} of $T$;
\item for every $x \in T$ and every $\gamma < \lambda$ above $\h_T(x)$, 
there exists $y \in T$ such that $x <_T y$ and $\h_T(y) = \gamma$;
\item if $x$ and $y$ are distinct nodes of $T$ 
with the same 
limit height $\delta$, 
then the sets $\{ z \in T : z <_T x \}$ and $\{ z \in T : z <_T y \}$ are not equal;
\item for every node $x$ of $T$, there are incomparable nodes $y$ and $z$ above $x$.
\end{enumerate}

Let $T$ and $U$ be $\lambda$-trees. 
A function $f : T \to U$ is an \emph{isomorphism} if 
$f$ is a bijection and for all $x$ and $y$ in $T$, 
$x <_T y$ iff $f(x) <_U f(y)$. 
We say that $T$ and $U$ are \emph{isomorphic} if there 
exists an isomorphism from $T$ to $U$. 
A \emph{club isomorphism} of $T$ and $U$ is an isomorphism 
$f : T \restriction C \to U \restriction C$, where $C \subseteq \lambda$ is some club. 
If there exists a club isomorphism of $T$ and $U$, then $T$ and $U$ are \emph{club isomorphic}. 
It is easy to verify that if $f : T \restriction C \to U \restriction C$ 
is a club isomorphism of $T$ and $U$, 
then for all $x \in T \restriction C$, 
$\h_T(x) = \h_U(f(x))$. 
Define $T$ and $U$ to be \emph{near} if there exist downwards closed subtrees of $T$ and $U$ 
which are club-isomorphic $\lambda$-trees, 
and otherwise $T$ and $U$ are \emph{far}.

In order to construct Aronszajn trees, we will employ generalized versions of the rationals 
and the reals (see \cite[Section 3]{Honzik} for more complete details).

\begin{definition}
	Let $\ka$ be a regular cardinal. 
	Define $\qk$ as the set of all functions $f : \ka \to 2$ such that 
	the set $\{ \alpha < \ka : f(\alpha) = 1 \}$ is non-empty and has size 
	less than $\ka$, ordered lexicographically.
\end{definition}

Observe that $\qk$ is a linear order of size $2^{<\ka}$ which is dense in itself 
and without endpoints. 
If $\ka^{<\ka} = \ka$, then $\qk$ has size $\ka$.

We list the basic properties of $\qk$ which we will use in our Aronszajn tree constructions.

\begin{lemma}
	Let $\ka$ be a regular cardinal.
	\begin{enumerate}
		\item Between any two elements of $\qk$ there exists an increasing sequence 
		of order type $\ka$;
		\item any increasing sequence of elements of 
		$\qk$ with order type less than $\ka$ has a 
		least upper bound in $\qk$;
		\item any decreasing sequence of elements of 
		$\qk$ whose order type is a limit ordinal 
		less than $\ka$ does not have a greatest lower bound.
	\end{enumerate}
\end{lemma}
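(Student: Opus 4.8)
The plan is to prove all three parts by directly manipulating characteristic functions, using throughout the fact that, since $\ka$ is regular, every subset of $\ka$ of size less than $\ka$ is bounded; in particular each $f \in \qk$ satisfies $\sup(\supp(f)) < \ka$, where $\supp(f) = \{\alpha < \ka : f(\alpha) = 1\}$. Recall that for $f \ne g$ we have $f < g$ exactly when $f(\beta) = 0$ and $g(\beta) = 1$ at the least place $\beta$ where they differ. For a sequence of length $\theta < \ka$, regularity also produces a single ordinal $\beta^* < \ka$ beyond which every term (and, in part~(1), both $f$ and $g$) is identically $0$.

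For part~(1), let $f < g$, let $\beta$ be the least place where they differ, and fix $\beta^* < \ka$ with $\beta^* > \sup(\supp(f) \cup \supp(g))$. For each $\xi$ with $1 \le \xi < \ka$ I would define $h_\xi$ to agree with $f$ on $[0,\beta^*)$, to be identically $1$ on $[\beta^*, \beta^* + \xi)$, and identically $0$ on $[\beta^* + \xi, \ka)$. Each $h_\xi$ lies in $\qk$, since $\supp(h_\xi) \subseteq \supp(f) \cup [\beta^*, \beta^* + \xi)$ is nonempty of size less than $\ka$. Comparing characteristic functions: $h_\xi < g$ because they first differ at $\beta$; $h_\xi > f$ because they agree below $\beta^*$ while $h_\xi(\beta^*) = 1 > 0 = f(\beta^*)$; and $h_\xi < h_\eta$ for $\xi < \eta$ because they first differ at $\beta^* + \xi$, where $h_\xi$ is $0$ and $h_\eta$ is $1$. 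Thus $\langle h_\xi : 1 \le \xi < \ka \rangle$ is an increasing sequence of order type $\ka$ inside the interval $(f,g)$.

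For part~(2), I would reduce to the Dedekind completeness of the lexicographic order on $2^{\beta^*}$. Let $X = \{ f_i : i < \theta \}$ be the set of terms of an increasing sequence, with least term $f_0$ and all terms supported below some $\beta^* < \ka$. Define $s \in 2^{\beta^*}$ by recursion, setting $s(\alpha) = 1$ iff there is some $x \in X$ with $x \restriction \alpha = s \restriction \alpha$ and $x(\alpha) = 1$ (and $s = 0$ on $[\beta^*, \ka)$). The standard verification shows $s$ is the least upper bound: if $x \in X$ and $x \ne s$, their first difference cannot have $x = 1$ and $s = 0$, for that would force $s = 1$ there, so $x < s$; and if $u < s$ with first difference $\alpha$, then $s(\alpha) = 1$ produces $x \in X$ with $x \restriction \alpha = u \restriction \alpha$ and $x(\alpha) = 1 > u(\alpha)$, so $u$ is not an upper bound. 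Finally $s \in \qk$: its support lies in $[0,\beta^*)$, hence has size less than $\ka$, and it is nonempty because $s \ge f_0$ while $f_0$ is not the constant zero function (which is the lexicographic minimum and does not lie in $\qk$).

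Part~(3) is where the real work lies, because the infimum computed in $2^{\beta^*}$ typically has support of size $\ka$ and so escapes $\qk$; the content of the statement is precisely that this obstruction cannot be repaired inside $\qk$. Let $\langle f_i : i < \theta \rangle$ be strictly decreasing with $\theta < \ka$ a limit, and suppose toward a contradiction that $v \in \qk$ is a greatest lower bound. As the sequence has no least element, $v < f_i$ for every $i$; let $\mu_i$ be the least place where they differ, so $v(\mu_i) = 0$ and $f_i(\mu_i) = 1$. A short comparison argument (using that $f_j$ lies strictly between $v$ and $f_i$ for $i < j$) shows $\langle \mu_i \rangle$ is nondecreasing, and regularity gives $\mu^* := \sup_i \mu_i < \ka$. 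I would then let $\rho$ be the least ordinal $\ge \mu^*$ with $v(\rho) = 0$ (which exists since $\supp(v)$ is bounded), and define $v'$ to agree with $v$ on $[0,\rho)$, to equal $1$ at $\rho$, and to equal $0$ on $(\rho, \ka)$. Then $v' \in \qk$ and $v' > v$, as they first differ at $\rho$. The key point is that $v'$ remains a lower bound: for each $i$ we have $\mu_i \le \mu^* \le \rho$, and if $\mu_i < \rho$ then $v'$ and $f_i$ first differ at $\mu_i$ with $v'(\mu_i) = 0 < 1 = f_i(\mu_i)$, while if $\mu_i = \rho$ then $v'$ and $f_i$ agree through $\rho$ and $v'$ is $0$ thereafter; either way $v' \le f_i$. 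This contradicts the maximality of $v$, so no greatest lower bound exists. I expect this last part to be the main obstacle, since it requires identifying the correct place $\rho$ at which to push a lower bound upward and verifying that the single modification respects every term of the sequence at once.
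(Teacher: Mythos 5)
Your proof is correct. For comparison: the paper does not actually prove this lemma --- it declares part (1) routine and cites Honzik (Lemma 3.4 there) for parts (2) and (3) --- so your write-up supplies a self-contained verification where the paper outsources one. The arguments you give are the natural direct ones: the "pad with a block of $1$'s past the supports" trick for (1), the standard recursive lexicographic supremum for (2), and for (3) the observation that the first-difference ordinals $\mu_i$ are nondecreasing and hence bounded by regularity, after which a single upward modification of a putative greatest lower bound $v$ at the least $\rho \ge \sup_i \mu_i$ with $v(\rho)=0$ stays below every $f_i$. All the delicate points are handled: in (1) the first difference $\beta$ of $f$ and $g$ lies below $\beta^*$ because $\beta \in \supp(g)$; in (2) the least upper bound $s$ is nonzero because it dominates $f_0$, and its support is contained in $\beta^*$; in (3) the case $\mu_i = \rho$ only yields $v' \le_{\q_\ka} f_i$ rather than strict inequality, but that is all a lower bound requires (and equality is in fact impossible since the sequence is strictly decreasing of limit length). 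No gaps.
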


The proof of (1) is routine. 
(2) and (3) are proven in \cite[Lemma 3.4]{Honzik}.

\begin{definition}
	Let $\ka$ be a regular cardinal. 
	Define $\R_\ka$ as the Dedekind completion of $\qk$.
\end{definition}

Note that $\R_\ka$ is a dense complete linear order without endpoints in which 
$\q_\ka$ is dense.

\begin{lemma}
	Let $\ka$ be a regular cardinal. 
	\begin{enumerate}
		\item Between any two elements of $\rk$ there exists an increasing sequence 
		of order type $\ka$;
		\item any increasing sequence of elements of $\rk$ 
		with order type a limit ordinal less than $\ka$ has a 
		least upper bound in $\qk$.
	\end{enumerate}
\end{lemma}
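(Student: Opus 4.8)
The plan is to reduce both parts to the corresponding facts about $\qk$ established in the preceding lemma, using the density of $\qk$ in $\rk$ together with the Dedekind completeness of $\rk$. Throughout I identify $\qk$ with its image under the order-preserving embedding into $\rk$.

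For part (1), given $a < b$ in $\rk$, I would first apply density of $\qk$ in $\rk$ twice to pick rationals $p, q \in \qk$ with $a < p < q < b$. Part (1) of the lemma on $\qk$ then produces an increasing sequence of order type $\ka$ lying strictly between $p$ and $q$ in $\qk$; since the embedding preserves order, this same sequence lies strictly between $a$ and $b$ in $\rk$, which is what is required. This part is essentially immediate once the $\qk$ result is in hand.

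For part (2), let $\langle x_\xi : \xi < \lambda \rangle$ be increasing in $\rk$ with $\lambda < \ka$ a limit ordinal. The idea is to interleave a rational sequence. Using density, for each $\xi$ choose $q_\xi \in \qk$ with $x_\xi < q_\xi < x_{\xi+1}$; then $q_\xi < x_{\xi+1} < q_{\xi+1}$ shows that $\langle q_\xi : \xi < \lambda \rangle$ is strictly increasing in $\qk$ and has order type $\lambda < \ka$. By part (2) of the lemma on $\qk$ it has a least upper bound $q \in \qk$. The proof is then finished by verifying that $q$ is in fact the least upper bound of $\langle x_\xi \rangle$ in $\rk$, since $q \in \qk$. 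This verification splits into two observations. First, the sequences $\langle x_\xi \rangle$ and $\langle q_\xi \rangle$ are interleaved, hence mutually cofinal, so an element of $\rk$ dominates all the $x_\xi$ iff it dominates all the $q_\xi$; thus the two sequences have exactly the same upper bounds in $\rk$, and therefore the same least upper bound (which exists by completeness of $\rk$). Second, the element $q$ computed as the least upper bound of $\langle q_\xi \rangle$ inside $\qk$ is also its least upper bound inside $\rk$: it is clearly an upper bound, and if some $u \in \rk$ were an upper bound with $u < q$, then density would furnish $s \in \qk$ with $u < s < q$, making $s$ an upper bound of $\langle q_\xi \rangle$ lying in $\qk$ strictly below $q$, contradicting the choice of $q$.

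The only step requiring genuine care — the ``hard part,'' such as it is — is this last point: ruling out the possibility that a gap of $\qk$ strictly below $q$ serves as a smaller upper bound in $\rk$. This is precisely where density of $\qk$ in its Dedekind completion is used, and also where the hypothesis that $\lambda$ is a limit ordinal matters, since for a successor order type the least upper bound would simply be the final term and could well be an element of $\rk \setminus \qk$.
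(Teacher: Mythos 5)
Your proof is correct and follows essentially the same route as the paper: part (1) by picking two rationals between the given reals and invoking the $\qk$ version, and part (2) by interleaving a rational sequence, taking its least upper bound in $\qk$, and observing that the interleaved sequences share their upper bounds. You merely spell out the final verification (that the $\qk$-least upper bound is also the $\rk$-least upper bound) which the paper leaves implicit.
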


\begin{proof}
	(1) Consider $r_0 \ro r_1$. By the density of $\qk$, we can fix 
	$r_0 \ro q_0 \ro q_1 \ro r_1$ 
	where $q_0$ and $q_1$ are in $\qk$. 
	Now apply Lemma 1.3(1) to $q_0$ and $q_1$.
	
	(2) Suppose that $\langle r_i : i < \delta \rangle$ is an increasing sequence in $\rk$, 
	where $\delta < \ka$ is a limit ordinal. 
	By the density of $\qk$, for each $i < \delta$ we can choose $q_i \in \qk$ 
	such that $r_i \ro q_i \ro r_{i+1}$. 
	Applying Lemma 1.3(2), let $q$ be the least upper bound of $\{ q_i : i < \delta \}$ in $\q_\ka$. 
	Then $q$ is also the least upper bound of $\{ r_i : i < \delta \}$ in $\R_\ka$.
\end{proof}

\begin{definition}
	Let $T$ be a tree and $L$ a linear order. 
	We say that $T$ is \emph{$L$-embeddable} if there exists a function 
	$f : T \to L$ such that $x <_T y$ implies $f(x) <_L f(y)$ for all $x, y \in T$. 
\end{definition}

Suppose that $\ka$ is a regular cardinal such that $\ka^{<\ka} = \ka$. 
Then $\qk$ has size $\ka$. 
Hence, if $T$ is a $\kp$-tree which is $\qk$-embeddable, then $T$ is special 
(the converse is also true by \cite[Lemma 3.12]{Honzik}). 
Also, $T$ being $\rk$-embeddable implies that $T$ is Aronszajn and not Suslin. 
This fact is well-known when $\ka = \omega$. 
For $\ka > \omega$, note that $T \restriction (\kp \setminus \cof(\ka))$ 
is special by Lemma 1.5, and then apply Lemma 1.1.

We now review a standard construction of a normal 
$\ka$-complete special $\kp$-Aronszajn tree 
which uses the linear order $\qk$. 
This construction will be a blueprint for more complicated constructions given later 
in the article, so we will provide many of the details.

Fix a regular cardinal $\ka$ and assume that $\ka^{<\ka} = \ka$. 
We will define by recursion a $\kp$-tree $T$ together with a map $\pi : T \to \qk$. 
We will maintain several properties of $T$ and $\pi$:
\begin{enumerate}
	\item $T(0)$ consists of $0$, $T(1)$ consists of the ordinals in the interval 
	$[1,\ka \cdot 2)$, and for each $1 < \alpha < \kp$, $T(\alpha)$ consists of 
	the ordinals in the interval $[\ka \cdot \alpha, \ka \cdot (\alpha+1) )$; 
	\item $x <_T y$ implies $\pi(x) \qo \pi(y)$, for all $x, y \in T$;
	\item for each $x \in T$, the restriction of $\pi$ to the immediate successors of $x$ 
	is a bijection from that set onto the set $\{ q \in \qk : \pi(x) \qo q \}$;
	\item if $\delta \in \kp \cap \cof(<\!\ka)$, then 
	every cofinal branch $b$ of $T \restriction \delta$ has a unique 
	upper bound $y$ in $T(\delta)$, and $\pi(y) = \sup \{ \pi(x) : x \in b \}$;
	\item for all $x \in T$, $\beta < \kp$, and $q \in \qk$ such that 
	$\h_T(x) < \beta$ and $\pi(x) \qo q$, 
	there exists $y \in T(\beta)$ above $x$ satisfying 
	$\pi(y) \le_{\qk} q$. 
\end{enumerate}

We will abbreviate the restriction of $\pi$ to $T \restriction \alpha$ by $\pi \restriction \alpha$, 
for all $\alpha < \kp$. 

For the base case, let $T(0)$ consist of $0$, and define $\pi(0)$ to be some 
arbitrary element of $\qk$. 
Let $T(1)$ consist of $[1,\ka \cdot 2)$, and define $\pi$ on $T(1)$ to be 
some bijection between $T(1)$ and the set $\{ q \in \qk : \pi(0) \qo q \}$.

For the successor case, 
let $0 < \alpha < \kp$ and assume that $T \restriction (\alpha+1)$ and 
$\pi \restriction (\alpha+1)$ are defined. 
Let the elements of $T(\alpha+1)$ consist of the ordinals in 
$[\ka \cdot (\alpha+1), \ka \cdot (\alpha+2))$. 
Let each node of $T(\alpha)$ have exactly $\ka$ many immediate successors in $T(\alpha+1)$. 
Define $\pi$ on $T(\alpha+1)$ so that for each $x \in T(\alpha)$, 
$\pi$ maps the set of immediate successors of $x$ onto the set 
$\{ q \in \qk : \pi(x) \qo q \}$. 
The inductive hypotheses are easy to check.

For the limit case, 
let $\delta < \kp$ be a limit ordinal and assume that 
$T \restriction \delta$ and $\pi \restriction \delta$ are defined. 
First, assume that $\delta$ has cofinality less than $\ka$. 
Since $\ka^{<\ka} = \ka$ and $\cf(\delta) < \ka$, 
there are exactly $\ka$ many cofinal branches of $T \restriction \delta$. 
Let the nodes of $T(\delta)$ be the ordinals in $[\ka \cdot \delta,\ka \cdot (\delta+1) )$. 
Place exactly one node above each cofinal branch of $T \restriction \delta$. 
For each $x \in T(\delta)$, define $\pi(x) := \sup \{ \pi(y) : y <_T x \}$.

Let us verify the inductive hypotheses. 
(1), (2), (3), and (4) are immediate. 
For (5), consider any $x \in T \restriction \delta$ and $q \in \qk$ such that $\pi(x) \qo q$. 
Applying Lemma 1.3(1), fix an increasing sequence $\langle q_i : i < \cf(\delta) \rangle$ 
of elements of $\qk$ between $\pi(x)$ and $q$. 
Fix an increasing and continuous sequence $\langle \delta_i : i < \cf(\delta) \rangle$ 
of ordinals cofinal in $\delta$ 
such that $\h_T(x) < \delta_0$. 
Using inductive hypotheses (4) and (5), 
recursively build a chain $\langle x_i : i < \cf(\delta) \rangle$ above $x$ such that 
$x_i \in T(\delta_i)$ and $\pi(x_i) \le_{\qk} q_i$ for all $i < \cf(\delta)$.
Let $y$ be an upper bound of this chain in $T(\delta)$. 
Then $x <_T y$ and $\pi(y) \le_{\qk} q$.

Secondly, assume that $\delta$ has cofinality $\ka$. 
In this case, there are $2^\ka$ many cofinal branches of $T \restriction \delta$. 
The definition of $T$ at this level depends on selecting which cofinal branches of 
$T \restriction \delta$ 
will have upper bounds in $T(\delta)$. 
It is this part of the construction which will vary in later constructions.

Consider any $x \in T \restriction \delta$ and $q \in \qk$ such that $\pi(x) \qo q$. 
Using Lemma 1.3(1) and inductive hypotheses (4) and (5) as in the previous case, 
recursively construct a chain above $x$ of length $\ka$ 
whose elements have heights cofinal in $\delta$ 
and whose values under $\pi$ are below $q$. 
Let $b(x,q)$ be the downward closure of this chain. 
Then $b(x,q)$ is a cofinal branch of $T \restriction \delta$. 
Using the fact that each node in $T \restriction \delta$ 
has $\ka$ many immediate successors, it is easy to arrange the function which maps 
$(x,q)$ to $b(x,q)$ to be injective. 
Let the nodes of $T(\delta)$ be the ordinals in the interval 
$[\ka \cdot \delta, \ka \cdot (\delta +1) )$. 
For each cofinal branch of the form $b(x,q)$, 
place one node above $b(x,q)$ and map it under $\pi$ to $q$. 
An argument similar to that in the previous case shows that 
the inductive hypotheses are maintained. 
Note that our definition in this case of cofinality $\ka$ 
gives us a stronger version of property (5), where 
``$\pi(y) \le_{\qk} q$'' is replaced by ``$\pi(y) = q$.''

This completes the construction of $T$ and $\pi$. 
Observe that since $\qk$ has size $\ka$, $T$ is special, and it is clearly normal 
and $\ka$-complete.

Let us make an additional observation about $T$ which we will need later. 
We claim that there exists an antichain $A \subseteq T$ such that 
$\{ \h_T(y) : y \in A \} = \kp \cap \cof(\ka)$. 
Namely, fix any $q \in \qk$ such that $\pi(0) \qo q$. 
Using the strengthened version of property (5) in the case of cofinality $\ka$, 
for each $\beta \in \kp \cap \cof(\ka)$ choose $y_\beta \in T(\beta)$ 
such that $\pi(y_\beta) = q$. 
Let $A := \{ y_\beta : \beta \in \kp \cap \cof(\ka) \}$. 
By property (2) of $T$, $A$ is an antichain.

\smallskip

We now turn to the combinatorial principles which we will use to construct 
non-club-isomorphic Aronszajn trees. 

Let $\ka$ be an infinite cardinal and $S \subseteq \kp$ a stationary set. 
Recall that $\Diamond(S)$ is the statement that there exists a sequence 
$\langle s_\alpha : \alpha \in S \rangle$, where each $s_\alpha \subseteq \alpha$, 
satisfying that for any set $X \subseteq \kp$ 
the set $\{ \alpha \in S : X \cap \alpha = s_\alpha \}$ is stationary. 
And $\Diamond^*(S)$ is the statement that there exists a sequence 
$\langle \mathcal S_\alpha : \alpha \in S \rangle$, where each $\mathcal S_\alpha \subseteq P(\alpha)$ 
has size $\ka$, satisfying that for any set $X \subseteq \kp$ there is a club $D \subseteq \kp$ 
such that for all $\alpha \in D \cap S$, $X \cap \alpha \in \mathcal S_\alpha$. 
We have that $\Diamond^*(S)$ implies $\Diamond(S)$ and $\Diamond(S)$ implies $2^\ka = \kp$.

\begin{thm}[Shelah \cite{diamonds}]
	Let $\ka$ be an uncountable cardinal and assume that $2^\ka = \kp$. 
	Then $\Diamond(\kp)$ holds.
\end{thm}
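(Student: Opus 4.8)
The plan is to establish $\Diamond(\kp)$ by adapting Kunen's classical derivation of diamond from $V = L$ to the hypothesis $2^\ka = \kp$. First I would record what the cardinal arithmetic buys us: since $2^\ka = \kp$, the number of bounded subsets of $\kp$ is $\sum_{\alpha < \kp} 2^{|\alpha|} = \kp$, so every bounded subset can be listed in a sequence of length $\kp$, and likewise $|H(\kp)| = 2^{<\kp} = \kp$, so one may fix a well-ordering $<^*$ of $H(\kp)$ of order type $\kp$. The essential point to keep in mind is that this is far weaker than enumerating all of $P(\kp)$ (which has size $2^{\kp}$): a diamond sequence cannot be built by listing and defeating potential counterexamples one at a time, since the single sequence $\langle s_\alpha : \alpha < \kp\rangle$ must guess all $2^{\kp}$ many subsets simultaneously on a stationary set.

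Unwinding the definition, $\Diamond(\kp)$ amounts to a sequence $\langle s_\alpha : \alpha < \kp\rangle$ with $s_\alpha \subseteq \alpha$ such that for every $X \subseteq \kp$ and every club $E \subseteq \kp$ there is $\alpha \in E$ with $X \cap \alpha = s_\alpha$. Mimicking Kunen, I would fix a continuous $\in$- and $\subseteq$-increasing chain $\langle M_\alpha : \alpha < \kp\rangle$ of elementary submodels of $(H(\kp), \in, <^*)$, each of size $\ka$, with $\ka \subseteq M_0$ and $\langle M_\beta : \beta \le \alpha\rangle \in M_{\alpha+1}$. For club-many $\alpha$ the characteristic ordinal $\delta_\alpha := M_\alpha \cap \kp$ is a genuine ordinal, and $E^* := \{\delta_\alpha : \alpha < \kp\}$ is club. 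One then attempts to set $s_\delta$, for $\delta = \delta_\alpha \in E^*$, equal to the trace $X \cap \delta$ coming from the $<^*$-least pair $(X,C)$ that is, \emph{as computed inside} $M_\alpha$, a counterexample to the sequence already built below $\delta$. If $\langle s_\delta : \delta \in E^*\rangle$ were not a diamond sequence, one would take the $<^*$-least global counterexample $(X,C)$ and argue, from elementarity of the $M_\alpha$ together with the leastness of $(X,C)$, that at club-many $\delta \in C \cap E^*$ the local computation returns exactly $X \cap \delta$, so $s_\delta = X \cap \delta$, contradicting that $(X,C)$ is a counterexample.

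The hard part, where the entire theorem resides, is justifying that the local computation at level $\delta$ really returns $X \cap \delta$: this asks $M_\alpha$ to have access to the initial segment $\langle s_\beta : \beta \in E^* \cap \delta\rangle$ of the sequence under construction, yet that initial segment is assembled along the chain $\langle M_\beta : \beta < \alpha\rangle$, which is typically \emph{not} an element of $M_\alpha$. This is precisely the obstruction that makes the naive argument collapse, and it is exactly why $2^{\aleph_0} = \aleph_1$ fails to yield $\Diamond(\omega_1)$; overcoming it is where the uncountability and regularity of $\ka$ must enter essentially. The route I would take is to replace the single global well-ordering by a guessing scheme localized along short cofinal sequences: fix a regular $\mu < \ka$ and, using that $\ka$ is uncountable and regular, invoke Shelah's club guessing to obtain $\langle C_\delta : \delta \in S\rangle$ on a stationary $S \subseteq \{\delta < \kp : \cf(\delta) = \mu\}$, with each $C_\delta$ cofinal in $\delta$ of order type $\mu$ and with every club of $\kp$ containing $C_\delta$ for stationarily many $\delta \in S$. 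Because $\mu < \ka$, the trace $X \cap \delta$ is assembled from the $\mu$-many increments of $X$ across consecutive points of $C_\delta$, each increment a bounded subset of $\kp$; feeding the length-$\kp$ enumeration of bounded sets through this club-guessing skeleton lets each level reconstruct the relevant trace using only data of size $<\ka$ that genuinely sits inside the corresponding model. The main obstacle is thus not the bookkeeping but this reconstruction step, which is the reason the hypothesis that $\ka$ is uncountable is indispensable; carrying it out in full is the content of Shelah's argument in \cite{diamonds}.
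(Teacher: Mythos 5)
The paper does not prove this theorem; it is imported as a black box from Shelah's paper \cite{diamonds} (where it appears, more precisely, as: if $\ka$ is uncountable and $2^\ka=\kp$, then $\Diamond(S)$ holds for every stationary $S\subseteq\{\delta<\kp:\cf(\delta)\ne\cf(\ka)\}$). So the only question is whether your sketch establishes the statement on its own, and it does not. You correctly identify the obstruction to the Kunen-style construction --- level $\delta$ cannot see the initial segment of the sequence being built --- and you correctly observe that this obstruction is why $2^{\aleph_0}=\aleph_1$ does not yield $\Diamond(\omega_1)$. But the step you offer to overcome it is exactly where the theorem lives, and as stated it fails. Decomposing $X\cap\delta$ along a club $C_\delta$ of order type $\mu<\ka$ into $\mu$ many bounded pieces codes $X\cap\delta$ by a function $h:\mu\to\kp$ listing the indices of those pieces in your enumeration of bounded sets. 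This code is indeed an object of size $\mu<\ka$, but it is one of up to $\ka^{\mu}\le 2^\ka=\kp$ many candidates, and level $\delta$ must commit in advance to a single guess $s_\delta$. Club guessing only guarantees that $C_\delta$ threads any prescribed club for stationarily many $\delta$; it provides no mechanism whatsoever for selecting the correct code $h$, which is a guessing problem of exactly the same character as the one you started with. Your closing sentence (``carrying it out in full is the content of Shelah's argument'') concedes that the essential step is being outsourced to the reference rather than proved.

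For the record, the known proof does not proceed by reconstructing traces level by level. It fixes a $\kp$-indexed family of candidate diamond sequences and shows by a counting argument that one of them must already be a $\Diamond(S)$-sequence, using $2^\ka=\kp$ together with the restriction $\cf(\delta)\ne\cf(\ka)$ for $\delta\in S$ in an essential way (the mismatch of cofinalities is what allows the code of $X\cap\delta$ to be captured from proper initial segments cofinally often). That pigeonhole over candidate sequences, not club guessing, is the missing idea; without it your outline is an accurate description of the difficulty rather than a proof of the theorem.
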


In our Aronszajn tree constructions we will use the following 
combinatorial principle of Brodsky-Rinot \cite{rinot22}.

\begin{definition}
	Let $\ka$ be a regular cardinal and $\mathcal S$ a family of stationary 
	subsets of $\kp \cap \cof(\ka)$. 
	Then $\boxtimes_\ka^*(\mathcal S)$ is the statement that there exists a sequence 
	$\langle C_\delta : \delta \in \kp \cap \cof(\ka) \rangle$, 
	where each $C_\delta$ is a club subset of 
	$\delta$ of order type $\ka$, satisfying that for any 
	unbounded set $X \subseteq \kp$ and any $S \in \mathcal S$, 
	there are stationarily many $\delta \in S$ 
	for which the set $(C_\delta \cap X) \setminus \lim(C_\delta)$ is cofinal in $\delta$. 
	In the case $\mathcal S = \{ S \}$ is a singleton, $\boxtimes_\ka^*(S)$ abbreviates 
	$\boxtimes_\ka^*(\{ S \})$.
\end{definition}

Note that if $\mathcal S' \subseteq \mathcal S$, then 
$\boxtimes_\ka^*(\mathcal S)$ implies $\boxtimes_\ka^*(\mathcal S')$.

The statement $\boxtimes_\ka^*(\mathcal S)$ is a special case of the very general \emph{proxy principle} 
which was introduced in \cite{rinot22} and used to construct a wide variety 
of Suslin trees. 
In particular, $\boxtimes_\ka^*(\mathcal S)$ is the instance 
$\text{P}^-(\kappa^+, 2,\mathrel{_{\ka}{\sqsubseteq}}, 1, \mathcal S,  2,1,\mathcal E_\kappa)$ of 
\cite[Definition 1.5]{rinot22}. 
For any stationary set $S \subseteq \kp \cap \cof(\ka)$, 
$\Diamond(S)$ implies $\boxtimes_\ka^*(S)$ by \cite[Theorem 5.1(2)]{rinot22}, and 
$\Diamond^*(\kp \cap \cof(\ka))$ implies 
$\boxtimes_\ka^*(\text{NS}^+ \restriction (\kp \cap \cof(\ka)))$ 
by \cite[Theorem 4.35]{rinot23}, 
where $\text{NS}^+ \restriction T$ is the collection of all stationary subsets of $T$ for any 
stationary set $T \subseteq \kp$.

Rather than using the principle $\boxtimes_\ka^*(\mathcal S)$ directly in our 
Aronszajn tree constructions, we will use consequences of $\boxtimes_\ka^*(\mathcal S)$ 
which are a kind of combination of 
diamond and club guessing principles. 
These consequences are described in the next two lemmas, 
whose proofs follow easily from the methods of \cite{rinot22}.

\begin{lemma}
	Let $\ka$ be a regular uncountable cardinal satisfying $2^\ka = \kp$, 
	and let $\langle s_\alpha : \alpha < \kp \rangle$ be a $\Diamond(\kp)$-sequence. 
	Let $S$ be a stationary subset of $\kp \cap \cof(\ka)$ satisfying $\boxtimes_\ka^*(S)$, 
	and choose a sequence $\langle C_\delta : \delta \in \kp \cap \cof(\ka) \rangle$ which 
	witnesses $\boxtimes_\ka^*(S)$. 
	Write each $C_\delta = \{ \alpha_{\delta,i} : i < \ka \}$ in increasing order. 
	Then for any club $C \subseteq \kp$ and any set $X \subseteq \kp$, 
	there are stationarily many $\delta \in S$ such that 
	$\sup\{ i \in \ka \cap \Succ : \alpha_{\delta,i} \in C, \ X \cap \alpha_{\delta,i} = 
	s_{\alpha_{\delta,i}} \} = \ka$.
\end{lemma}

\begin{proof}
	Let $C \subseteq \kp$ be a club and $X \subseteq \kp$ a set. 
	Define $U := \{ \alpha \in \kp : X \cap \alpha = s_\alpha \}$, which is 
	stationary by the diamond property. 
	So $U \cap C$ is stationary. 
	In particular, $U \cap C$ is an unbounded subset of $\kp$. 
	By the choice of $\langle C_\delta : \delta \in \kp \cap \cof(\ka) \rangle$, 
	there are stationarily many $\delta \in S$ for which the set 
	$(C_\delta \cap U \cap C) \setminus \lim(C_\delta)$ is cofinal in $\delta$. 
	For any such $\delta$, 
	there are cofinally many $i < \ka$ such that $i \in \Succ$ and 
	$\alpha_{\delta,i} \in U \cap C$, and hence by the choice of $U$, 
	$X \cap \alpha_{\delta,i} = s_{\alpha_{\delta,i}}$.
\end{proof}

The proof of the next lemma is similar and we leave it for the interested reader.

\begin{lemma}
	Let $\ka$ be a regular uncountable cardinal satisfying $2^\ka = \kp$, 
	and let $\langle s_\alpha : \alpha < \kp \rangle$ be a $\Diamond(\kp)$-sequence. 
	Let $S$ be a stationary subset of $\kp \cap \cof(\ka)$ satisfying 
	$\boxtimes_\ka^*(\text{NS}^+ \restriction S)$, and choose a sequence 
	$\langle C_\delta : \delta \in \kp \cap \cof(\ka) \rangle$ which witnesses 
	$\boxtimes_\ka^*(\text{NS}^+ \restriction S)$. 
	Write each $C_\delta = \{ \alpha_{\delta,i} : i < \ka \}$ in increasing order. 
	Then for any club $C \subseteq \kp$ and any set $X \subseteq \kp$, 
	there exists a club $D \subseteq \kp$ such that for all $\delta \in D \cap S$, 
	$\sup\{ i \in \ka \cap \Succ : \alpha_{\delta,i} \in C, \ X \cap \alpha_{\delta,i} = 
	s_{\alpha_{\delta,i}} \} = \ka$.
\end{lemma}

\section{Distinct Aronszajn Trees}

In this section we show how to use the proxy principle to construct non-club-isomorphic 
Aronszajn trees. 
This construction can be thought of as a warm up for the main result in Section 3.

\begin{thm}
	Let $\ka$ be a regular uncountable cardinal such that $\ka^{<\ka} = \ka$ and $2^\ka = \kp$. 
	Assume that $\boxtimes^*_\kappa(\kp \cap \cof(\ka))$ holds. 
	Then there exist two normal $\ka$-complete $\rk$-embeddable 
	$\kp$-Aronszajn trees which are not club isomorphic. 
	One of the two trees $U$ satisfies that for some antichain $A \subseteq U$, 
	$\{ \h_U(x) : x \in A \} = \kp \cap \cof(\ka)$, 
	whereas the other tree $T$ has the property 
	that there does not exist an antichain $B \subseteq T$ 
	and a club $C \subseteq \kp$ such that 
	$C \cap \cof(\ka) \subseteq \{ \h_T(y) : y \in B \}$. 
\end{thm}

In particular, the conclusion of the theorem follows from 
$\ka^{<\ka} = \ka$ and $\Diamond(\kp \cap \cof(\ka))$ (this assumption is 
strictly stronger than that of Theorem 2.1; see the proof of Example 1.30 and footnote 15 
of \cite{rinot22}). 
An alternative proof under the assumptions of Theorem 2.1 
of the existence of two non-club-isomorphic normal $\ka$-complete $\rk$-embeddable 
$\kp$-Aronszajn trees based on a result of 
Brodsky-Rinot \cite{rinot29} is sketched at the end of the section.

\begin{lemma}
	Let $\lambda$ be a regular uncountable cardinal. 
	Assume that $T$ and $U$ are club-isomorphic $\lambda$-trees. 
	If $A \subseteq U$ is an antichain such that $S := \{ \h_U(x) : x \in A \}$ 
	is a stationary subset of $\lambda$, then there exists a club 
	$C \subseteq \lambda$ and an antichain $B \subseteq T$ such that 
	$\{ \h_T(y) : y \in B \} = S \cap C$.
\end{lemma}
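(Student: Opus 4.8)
The plan is to use the club isomorphism to transport the antichain $A$ from $U$ back to $T$, taking care that heights are preserved and that the transported set lands inside the domain of the isomorphism. Let $f : U \restriction C_0 \to T \restriction C_0$ be a club isomorphism, where $C_0 \subseteq \lambda$ is a club. (Recall from the excerpt that a club isomorphism preserves heights, so $\h_U(x) = \h_T(f(x))$ for every $x$ in the domain.) The natural candidate for $B$ is the image under $f$ of those elements of $A$ whose heights lie in $C_0$.

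First I would set $A' := \{ x \in A : \h_U(x) \in C_0 \}$ and observe that since $S = \{ \h_U(x) : x \in A \}$ is stationary and $C_0$ is club, the set $S \cap C_0$ is stationary, and moreover (after possibly thinning) $\{ \h_U(x) : x \in A' \} = S \cap C_0$. Here I would want $A$ to be an antichain in which distinct elements have distinct heights; if that is not automatic, I would first pass to a subantichain of $A$ that injects into its height set, which is harmless since I only care about the height set being stationary. Then I would set $B := f[A']$. Because $f$ is an isomorphism of $T \restriction C_0$ and $U \restriction C_0$ — in particular an order isomorphism for the inherited tree orders — it maps incomparable nodes to incomparable nodes, so $B$ is an antichain in $T$. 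Height preservation gives $\{ \h_T(y) : y \in B \} = \{ \h_U(x) : x \in A' \} = S \cap C_0$.

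To match the exact form of the conclusion, I would take $C := C_0$, so that $\{ \h_T(y) : y \in B \} = S \cap C$ as required. The one subtlety is ensuring the set of heights is \emph{exactly} $S \cap C$ and not merely a subset: this is why I insist that the elements of $A$ have pairwise distinct heights and that every height in $S \cap C_0$ is actually realized by some element of $A'$, both of which follow from choosing $A$ (or a subantichain) with a single witness at each height in $S$.

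The main obstacle I anticipate is purely bookkeeping rather than conceptual: confirming that $f$ genuinely preserves the tree order in both directions when restricted to the clubbed subtrees, and handling the possibility that the given antichain $A$ has several nodes at the same height in $S$. Both are routine once the height-preservation property quoted in the excerpt is invoked, so I expect no deep difficulty — the content of the lemma is simply that stationary-antichain witnesses are transported across club isomorphisms, intersected with the relevant club.
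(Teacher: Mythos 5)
Your proposal is correct and is essentially identical to the paper's proof: fix the club isomorphism $f$, restrict $A$ to the nodes with heights in the club, and push forward by $f$, using height preservation to compute the height set of the image. The only divergence is your worry about thinning $A$ to have one node per height, which is unnecessary — the set $\{\h_U(x) : x \in A'\}$ equals $S \cap C$ automatically, regardless of how many nodes of $A$ sit at each height.
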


\begin{proof}
	Fix a club isomorphism $f : U \restriction C \to T \restriction C$. 
	Let $A' := \{ x \in A : \h_U(x) \in C \}$. 
	Note that $\{ \h_U(x) : x \in A' \} = S \cap C$. 
	Since $f$ is an isomorphism and $A' \subseteq \dom(f)$, 
	$B := f[A']$ is an antichain of $T$. 
	And $\{ \h_T(y) : y \in B \} = 
	\{ \h_T(f(x)) : x \in A' \} = \{ \h_U(x) : x \in A' \} = S \cap C$.
\end{proof}

We now prove Theorem 2.1. 
Fix a regular cardinal $\ka$ 
such that $\ka^{<\ka} = \ka$ and $2^\ka = \kp$. 
Assume that $\boxtimes^*_\kappa(\kp \cap \cof(\ka))$ holds. 
Fix sequences $\langle C_\delta : \delta \in \kp \cap \cof(\ka) \rangle$ and 
$\langle s_\alpha : \alpha < \kp \rangle$ satisfying the description given in Lemma 1.9. 
Our goal is to produce normal $\ka$-complete $\rk$-embeddable 
$\kp$-Aronszajn trees $T$ and $U$ which are not club isomorphic.

By the construction in Section 1, we can fix a 
normal $\ka$-complete $\kp$-Aronszajn tree $U$ which is $\qk$-embeddable 
and satisfies that for some antichain $A \subseteq U$,  
$\{ \h_U(x) : x \in A \} = \kp \cap \cof(\ka)$. 
Then $U$ is also $\rk$-embeddable.  
We will build a normal $\ka$-complete $\rk$-embeddable 
$\kp$-Aronszajn tree $T$ satisfying that there does not exist an antichain $B \subseteq T$ 
and a club $C \subseteq \kp$ such that 
$C \cap \cof(\ka) \subseteq \{ \h_T(y) : y \in B \}$. 
Lemma 2.2 then implies that $T$ and $U$ are not club isomorphic, which completes the proof.

We construct $T$ together with a function $\pi : T \to \rk$ by recursion. 
We will maintain the following properties:
\begin{enumerate}
	\item $T(0)$ consists of $0$, $T(1)$ consists of the ordinals in the interval 
	$[1,\ka \cdot 2)$, and for each $1 < \alpha < \kp$, $T(\alpha)$ consists of 
	the ordinals in the interval $[\ka \cdot \alpha, \ka \cdot (\alpha+1) )$;
	\item $x <_{T} y$ implies $\pi(x) \ro \pi(y)$, for all $x, y \in T$;
	\item for each $x \in T$, the restriction of $\pi$ to the immediate successors of $x$ 
	is a bijection from that set onto the set $\{ q \in \qk : \pi(x) \ro q \}$;
	\item if $\delta \in \kp \cap \cof(<\! \ka)$, then 
	every cofinal branch $b$ of $T \restriction \delta$ has a unique 
	upper bound $y$ in $T(\delta)$, and $\pi(y) = \sup \{ \pi(x) : x \in b \}$;
	\item for all $x \in T$, $\beta < \kp$, and $q \in \qk$ with
	$\h_{T}(x) < \beta$ and $\pi(x) \ro q$, 
	there exists $y \in T(\beta)$ above $x$ such that $\pi(y) \le_{\rk} q$. 
\end{enumerate}

We will abbreviate the restriction of $\pi$ to $T \restriction \delta$ by $\pi \restriction \delta$ 
for all $\delta < \kp$.

The base case, successor steps, and limit stages of cofinality less than $\ka$ 
are handled in basically the same way as in the construction of the Aronszajn tree in Section 1. 
The only difference is that we are mapping the nodes of $T$ into $\rk$ instead of $\qk$. 
But nodes of successor height will have their values under $\pi$ in $\qk$ by (3), and by 
Lemma 1.5(2) nodes whose heights are limit ordinals of cofinality less than $\ka$ 
also have their values under $\pi$ in $\qk$.
Since $\qk$ is dense in $\rk$, property (5) is easily shown to hold at successor levels and 
at limit levels of cofinality less than $\ka$. 
Thus, the inductive hypotheses hold at these types of levels.

Assume that $\delta < \kp$ has cofinality $\ka$ and we have defined 
$T \restriction \delta$ and $\pi \restriction \delta$ as required. 
We would like to associate to each pair $(x,q)$, where $x \in T \restriction \delta$ 
and $q \in \qk$ with $\pi(x) \ro q$, a cofinal branch $b(x,q)$ of $T \restriction \delta$. 
Then we will add an upper bound $y$ to $b(x,q)$ at level $\delta$ 
and define $\pi(y)$ so that $\pi(y) <_{\rk} q$ and $\pi(z) \ro \pi(y)$ 
for all $z \in b(x,q)$. 
If we succeed in doing this, the inductive hypotheses are clearly maintained. 
Since each node has $\ka$ many immediate successors and there are only $\ka$ many 
pairs $(x,q)$ to handle, it is easy to arrange that the function mapping $(x,q)$ to 
$b(x,q)$ is injective, so we will neglect this point.

Fix $x \in T \restriction \delta$ and $q \in \qk$ with $\pi(x) \ro q$. 
Recall that $C_\delta = \{ \alpha_{\delta,i} : i < \ka \}$ is a club subset of $\delta$. 
Let $i^* < \ka$ be the least ordinal such that $\h_T(x) < \alpha_{\delta,i^*}$. 
We will recursively define sequences $\langle x_i : i < \ka \rangle$ and 
$\langle q_i : i < \ka \rangle$ satisfying:
\begin{enumerate}
	\item for all $i \le i^*$, $x_i = x$ and $q_i = q$;
	\item for all $i^* < i < \ka$, $x_i$ is a node above $x$ 
	on level $\alpha_{\delta,i}$ of $T$ and $q_i \in \qk$;
	\item $q_j \qo q_i$ for all $i^* \le i < j < \ka$;
	\item for all $i, j < \ka$, $\pi(x_i) \ro q_j$.  
\end{enumerate}

Begin by setting $x_i := x$ and $q_i := q$ for all $i \le i^*$. 
Now let $i > i^*$ be given and assume that for all $j < i$ we have defined $x_j$ and $q_j$ as required.

Suppose that $i = j + 1$ is a successor ordinal, and we will define $x_{i}$ and $q_{i}$. 
Consider the following statements:
\begin{itemize}
	\item[(A)] $\alpha_{\delta,i} = \ka \cdot \alpha_{\delta,i}$.
	\item[(B)] $s_{\alpha_{\delta,i}}$ is an antichain of $T \restriction \alpha_{\delta,i}$.
	\item[(C)] there exists some $y \in s_{\alpha_{\delta,i}}$ above $x_j$ in $T$ such that 
	$\pi(y) \ro q_j$.
\end{itemize}
First, assume that these statements are not all true. 
In that case, choose $x_i$ to be any node above $x_j$ 
on level $\alpha_{\delta,i}$ of $T \restriction \delta$ 
such that $\pi(x_i) \ro q_j$ using inductive property (5). 
Then choose $q_{i} \in \qk$ 
such that $\pi(x_i) \ro q_{i} \ro q_j$. 
Secondly, assume that all three statements are true. 
Fix $y$ as in (C). 
Then use inductive property (5) to 
choose $x_i$ above $y$ on level $\alpha_{\delta,i}$ of $T \restriction \delta$ satisfying that 
$\pi(x_i) \ro q_j$. 
Then choose $q_i \in \qk$ such that $\pi(x_i) \ro q_i \ro q_j$.

Suppose that $i$ is a limit ordinal. 
Since $i < \ka$, there exists a unique node $x_i$ on level $\alpha_{\delta,i}$ 
of $T$ which is above $x_j$ for all $j < i$, and $\pi(x_i) = \sup \{ \pi(x_j) : j < i \}$. 
By Lemma 1.5(2), $\pi(x_i)$ is in $\qk$. 
By Lemma 1.3(3), the descending 
sequence $\langle q_j : i^* < j < i \rangle$ does not have a greatest lower bound in $\qk$. 
In particular, $\pi(x_i)$ is a lower bound but not a greatest lower bound of this sequence. 
Thus, we can fix $q_i \in \qk$ such that $\pi(x_i) \qo q_i$ and 
for all $j < i$, $q_i \qo q_j$.

Let $b(x,q)$ be the downward closure in $T \restriction \delta$ of the chain $\{ x_i : i < \ka \}$. 
Then $b(x,q)$ is a cofinal branch of $T \restriction \delta$. 
We place some $y$ above this branch on level $\delta$, and define 
$\pi(y) := \inf \{ q_j : j < \ka \}$. 
This makes sense because the set $\{ q_i : i < \ka \}$ is bounded below by $\pi(x)$ and 
$\rk$ is complete. 
Now for all $i < \ka$ and $j < \ka$, $\pi(x_i) \ro \pi(x_{i+1}) \ro q_j$, and hence 
$\pi(x_i) \ro \inf \{ q_j : j < \ka \} = \pi(y)$. 
It follows that for all $z <_T y$, $\pi(z) \ro \pi(y)$. 
Also, $\pi(y) \ro q$. 

This completes the construction of $T$. 
Let us prove that $T$ is as required. 
Clearly, $T$ is a normal $\ka$-complete $\rk$-embeddable $\kp$-Aronszajn tree. 
We claim that there does not exist an antichain $B \subseteq T$ and a club $C \subseteq \kp$ 
such that 
$C \cap \cof(\ka) \subseteq \{ \h_T(y) : y \in B \}$.

Let $B \subseteq T$ be an antichain and $C \subseteq \kp$ a club. 
For each $q \in \qk$, let $C_q$ be the club set of $\alpha < \kp$ such that for all 
$x \in T \restriction \alpha$, if there exists some $z \in B$ such that 
$x <_T z$ and $\pi(z) \ro q$, then there exists such a $z$ which is in $T \restriction \alpha$. 
Let $D$ be the club of all $\alpha < \kp$ such that $\alpha = \ka \cdot \alpha$.
Define 
$
E := C \cap D \cap \bigcap \{ C_q : q \in \qk \}.
$
Then $E$ is club in $\kp$ since $\qk$ has cardinality $\ka$.

Let $S$ be the set of $\delta \in \kp \cap \cof(\ka)$ such that the set 
$\{ i \in \ka \cap \Succ : \alpha_{\delta,i} \in E, \ B \cap \alpha_{\delta,i} = s_{\alpha_{\delta,i}} \}$ 
is cofinal in $\ka$. 
By our choice of the sequences as in Lemma 1.9, $S$ is stationary. 
Fix $\delta$ in $S \cap E$. 
Then $\delta \in C \cap \cof(\ka)$.

Suppose for a contradiction that 
$C \cap \cof(\ka) \subseteq \{ \h_T(y) : y \in B \}$. 
Then in particular, $\delta \in \{ \h_T(y) : y \in B \}$. 
Fix $y^* \in B$ such that $\h_T(y^*) = \delta$. 
By the construction of $T$, $y^*$ is an upper bound of a branch $b(x,q)$ for some 
$x \in T \restriction \delta$ and $q \in \qk$ with $\pi(x) \ro q$.  
Fix $i^* < \ka$ and sequences $\langle x_i : i < \ka \rangle$ and 
$\langle q_j : j < \ka \rangle$ as described in the definition of $b(x,q)$.

As $\delta \in S$, we can find $i = j + 1 \in \ka \cap \Succ$ greater than $i^*$ such that 
$\alpha_{\delta,i} \in E$ and 
$B \cap \alpha_{\delta,i} = s_{\alpha_{\delta,i}}$. 
Since $\alpha_{\delta,i} \in D$, 
$\alpha_{\delta,i} = \ka \cdot \alpha_{\delta,i}$. 
And $s_{\alpha_{\delta,i}} = B \cap \alpha_{\delta,i}$ is an antichain of $T \restriction \alpha_{\delta,i}$. 
Thus, statements (A) and (B) in the case division of the 
definition of $x_{i}$ and $q_{i}$ are met.

We claim that statement (C) holds as well. 
The node $y^*$ is an element of $B$ which is above $x_j$. 
By definition, $\pi(y^*) = \inf \{ q_i : i < \ka \}$, and in particular, 
$\pi(y^*) \ro q_j$. 
Since $\alpha_{\delta,i} \in C_{q_j}$, 
there exists some $y \in B \cap (T \restriction \alpha_{\delta,i})$ 
above $x_j$ such that $\pi(y) \ro q_j$. 
Then $y \in B \cap \alpha_{\delta,i} = s_{\alpha_{\delta,i}}$, proving (C). 
 
By the definition of $x_{i}$, 
there exists some $y \in s_{\alpha_{\delta,i}} = B \cap \alpha_{\delta,i}$ below $x_{i}$. 
But $x_i <_T y^*$, so $y <_T y^*$.  
This is a contradiction since $B$ is an antichain and both $y$ and $y^*$ are in $B$. 
This completes the proof of Theorem 2.1.

\smallskip

Assaf Rinot has pointed out an alternative proof of the existence of 
non-club-isomorphic normal $\ka$-complete $\rk$-embeddable 
$\kp$-Aronszajn trees under the assumptions of Theorem 2.1. 
We give a sketch of this proof now. 
Under those assumptions, by \cite[Theorem 5.4]{rinot29} there exist $\kp$-Aronszajn 
trees $T_0$ and $T_1$ such that $T_0$ is special and $T_1$ is 
normal and not special. 
These trees are $T(\rho_0)$ and $T(\rho_1)$ associated with walks on ordinals 
using a $C$-sequence of clubs bounded in order type by $\ka$, and assuming 
$\ka^{<\ka} = \ka$, such trees are $\R_\ka$-embeddable. 
Let $T_0'$ be a normal subtree of $T_0$.

Now using the method of the proof of \cite[Proposition 3.7]{jk31}, we can enlarge $T_0'$ and $T_1$ 
to normal $\ka$-complete $\rk$-embeddable $\kp$-Aronszajn trees 
$U_0$ and $U_1$ such that $U_0$ is special and $U_1$ is not special. 
But a special tree cannot be club isomorphic to a non-special tree, since by a 
straightforward argument if a 
tree is special on a club of levels, then it is special.

Note however that this alternative proof does not obviously generalize to construct a large 
pairwise far family of Aronszajn trees, as we will do in the next section.

\section{A large pairwise far family of Aronszajn trees}

We now present the main result of the article. 

\begin{thm}
	Let $\ka$ be a regular uncountable cardinal such that $\ka^{<\ka} = \ka$ and $2^\ka = \kp$. 
	Assume that $\boxtimes^*_\kappa(\text{NS}^+ \restriction (\kp \cap \cof(\ka)))$ holds. 
	Then there exists a pairwise far family of $2^{(\kp)}$ many 
	normal $\ka$-complete $\rk$-embeddable non-special $\kp$-Aronszajn trees. 
	More specifically, for any two trees $T$ and $U$ in the family, there is a stationary 
	set $S_{T,U} \subseteq \kp \cap \cof(\ka)$ such that 
	$U \res S_{T,U}$ is special whereas $T$ has no stationary antichain below $S_{T,U}$.
\end{thm}

In particular, the conclusion of the theorem follows from 
$\ka^{<\ka} = \ka$ and $\Diamond^*(\kp \cap \cof(\ka))$.

Theorem 3.1 will follow quickly from the next theorem, which is of independent interest. 

\begin{thm}
	Let $\ka$ be a regular uncountable cardinal such that $\ka^{<\ka} = \ka$ and $2^\ka = \kp$. 
	Let $S \subseteq \kp \cap \cof(\ka)$ be stationary and assume that 
	$\boxtimes_\ka^*(\text{NS}^+ \restriction S)$ holds. 
	Then there exists a normal $\ka$-complete $\rk$-embeddable non-special 
	$\kp$-Aronszajn tree $T_S$ satisfying:
	\begin{enumerate}
	\item the subtree $T_S \restriction (\kp \setminus S)$ is special;
	\item for any antichain $A \subseteq T_S$, the set 
	$\{ \h_{T_S}(x) : x \in A \} \cap S$ is non-stationary.
	\end{enumerate}
\end{thm}

Property (2) says that $T_S$ has no stationary antichain below $S$. 
Observe that (2) implies that $T_S$ is non-special. 
For if $T_S$ is special, then so is $T_S \res S$. 
But then an application of Lemma 1.1 gives a contradiction with property (2).

In the case $S = \kp \cap \cof(\ka)$, 
our proof of Theorem 3.2 will provide an example of a recursive construction 
of an almost-Suslin tree which is not Suslin. 
This fact is in contrast to other constructions of such trees in the literature 
which come from projections or reduced powers of other trees 
(see, for example, \cite{rinotnew} and \cite{rinotnew2}).

We claim that Theorem 3.1 follows from Theorem 3.2. 
To see this, let 
$\ka$ be a regular uncountable cardinal such that $\ka^{<\ka} = \ka$ and $2^\ka = \kp$. 
Fix a family $\{ S_i : i < 2^{(\kp)} \}$ of stationary subsets of $\kp \cap \cof(\ka)$ 
such that for all distinct $i, j < 2^{(\kp)}$, $S_i \setminus S_j$ is stationary. 
The existence of such a family can be proven by a slight variation of 
the proof of \cite[Proposition 1.1]{treesandlinear}. 
Assume that $\boxtimes^*_\kappa(\text{NS}^+ \restriction (\kp \cap \cof(\ka)))$ holds. 
Then for all $i < 2^{(\kp)}$, 
$\boxtimes^*_\kappa(\text{NS}^+ \restriction S_i)$ holds. 
For each $i < 2^{(\kp)}$, fix a $\kp$-Aronszajn tree $T_i$ satisfying the conclusions of 
Theorem 3.2 for the set $S_i$.

Suppose for a contradiction that $T_i$ and $T_j$ are near for some distinct $i, j < 2^{(\kp)}$. 
Fix downwards closed subtrees $T_i'$ and $T_j'$ of $T_i$ and $T_j$ respectively which are club isomorphic. 
Fix a club isomorphism $f : T_i' \restriction C \to T_j' \restriction C$.

Let $S^* := C \cap (S_j \setminus S_i)$, which is a stationary subset of $\kp$. 
As $S^* \subseteq \kp \setminus S_i$, $T_i \restriction S^*$ is special. 
Therefore, $T_i' \restriction S^*$ is special. 
By Lemma 1.1, fix an antichain $A \subseteq T_i' \restriction S^*$ 
such that $S^{**} := \{ \h_{T_i'}(x) : x \in A \}$ is stationary. 
Note that $S^{**} \subseteq S^* \subseteq C \cap S_j$.

Now $S^* \subseteq C$, so $A \subseteq \dom(f)$. 
Hence, $f[A]$ is an antichain of $T_j' \restriction C$. 
So $f[A]$ is an antichain of $T_j$. 
Also 
	$$
	\{ \h_{T_j}(y) : y \in f[A] \} = \{ \h_{T_j'}(f(x)) : x \in A \} = 
	\{ \h_{T_i'}(x) : x \in A \} = S^{**},
	$$
which is a stationary subset of $S_j$. 
Thus, $f[A]$ is an antichain of $T_j$ for which $\{ \h_{T_j}(y) : y \in f[A] \}$ 
is a stationary subset of $S_j$, contradicting the choice of $T_j$.

It remains to prove Theorem 3.2. 
Assume that $\ka^{<\ka} = \ka$ and $2^\ka = \kp$, and let 
$S \subseteq \kp \cap \cof(\ka)$ be a stationary set such that 
$\boxtimes_\ka^*(\text{NS}^+ \restriction S)$ holds.
Fix sequences $\langle C_\delta : \delta \in S \rangle$ and 
$\langle s_\alpha : \alpha < \kp \rangle$ satisfying the description given in Lemma 1.10.

We construct $T_S$ together with a function $\pi_S : T_S \to \rk$ by recursion. 
The construction of these objects will follow along the lines of the two tree constructions 
from Sections 1 and 2. 
In particular, in defining level $\delta$ of $T_S$ for $\delta \in \kp \cap \cof(\ka)$ 
which is not in $S$, we follow the construction from Section 1, and when $\delta \in S$, 
we follow the construction from Section 2. 
We will maintain the following properties:
\begin{enumerate}
	\item $T_S(0)$ consists of $0$, $T_S(1)$ consists of the ordinals in the interval 
	$[1,\ka \cdot 2)$, and for each $1 < \alpha < \kp$, $T_S(\alpha)$ consists of 
	the ordinals in the interval $[\ka \cdot \alpha, \ka \cdot (\alpha+1) )$; 
	\item $x <_{T_S} y$ implies $\pi_S(x) \ro \pi_S(y)$, for all $x, y \in T_S$;
	\item for each $x \in T_S$, $\pi_S$ restricted to the immediate successors of $x$ 
	is a bijection from that set onto the set $\{ q \in \qk : \pi_S(x) \ro q \}$; 
	\item if $\delta \in \kp \cap \cof(<\! \ka)$, then 
	every cofinal branch $b$ of $T_S \restriction \delta$ has a unique 
	upper bound $y$, and $\pi_S(y) = \sup \{ \pi_S(x) : x \in b \}$;
	\item for all $x \in T_S$, $\beta < \kp$, and $q \in \qk$ with
	$\h_{T_S}(x) < \beta$ and $\pi_S(x) \ro q$, 
	there exists $y \in T_S(\beta)$ above $x$ such that $\pi_S(y) \le_{\rk} q$.
\end{enumerate}

We will abbreviate the restriction of $\pi_S$ to 
$T_S \restriction \delta$ by $\pi_S \restriction \delta$.

As in the construction of Section 2, it suffices to define the tree at levels of cofinality $\ka$, 
the other cases being easy. 
For all $\delta \in \kp \cap \cof(\ka)$ which are not in $S$, 
build a family of distinct cofinal branches $b(x,q)$ 
of $T_S \restriction \delta$, 
for each $x \in T_S \restriction \delta$ and $q \in \qk$ with $\pi_{S}(x) \ro q$, just 
as we did in Section 1. 
Then put an ordinal $y$ above each such branch $b(x,q)$ on level $\delta$ 
and define $\pi_S(y) := q$.

This completes the description of $T_S$ on levels not in $S$. 
Since $\pi_S(x)$ will be in $\qk$ for all $x \in T_S$ of height not in $S$, 
$T_S \restriction (\kp \setminus S)$ will be special.

It remains to define $T_S$ and $\pi_S$ at levels which are in $S$. 
So assume that $\delta \in S$ and $T_S \restriction \delta$ and 
$\pi_S \restriction \delta$ have been defined. 
We will associate to each pair $(x,q)$, where $x \in T_S \restriction \delta$ 
and $q \in \qk$ with $\pi_S(x) \ro q$, a cofinal branch $b(x,q)$ of $T_S \restriction \delta$, 
and then add an upper bound $y$ to $b(x,q)$ on level $\delta$ and define $\pi_S(y)$. 
Since each node of $T_S \restriction \delta$ has $\ka$ many immediate successors, it is easy to 
arrange that the function which maps $(x,q)$ to $b(x,q)$ is injective.

Fix $x \in T_S \restriction \delta$ and $q \in \qk$ such that $\pi_S(x) \ro q$, and we 
will define $b(x,q)$. 
Recall that $C_\delta = \{ \alpha_{\delta,i} : i < \ka \}$ is a club subset of $\delta$. 
Let $i^* < \ka$ be the least ordinal such that $\h_{T_S}(x) < \alpha_{\delta,i^*}$.

We will define sequences $\langle x_i : i < \ka \rangle$ and 
$\langle q_i : i < \ka \rangle$ satisfying:
\begin{enumerate}
	\item for all $i \le i^*$, $x_i = x$ and $q_i = q$;
	\item for all $i^* < i < \ka$, $x_i$ is a node on level $\alpha_{\delta,i}$ of $T_S \restriction \delta$ 
	above $x$;
	\item for all $i < \ka$, $q_i \in \qk$;
	\item $q_j \qo q_i$ for all $i^* \le i < j < \ka$;
	\item for all $i,j < \ka$, $\pi_S(x_i) \ro q_j$.  
\end{enumerate}

Begin by setting $x_i := x$ and $q_i := q$ for all $i \le i^*$. Now let $i < \ka$ be greater than $i^*$ and assume that for all $j < i$ we have defined 
$x_j$ and $q_j$ as required.

\bigskip

\noindent Case 1: Assume that $i = j + 1$ is a successor ordinal. 
Consider the following statements:
\begin{itemize}
\item[(A)] $\alpha_{\delta,i} = \ka \cdot \alpha_{\delta,i}$;
\item[(B)] $s_{\alpha_{\delta,i}}$ is an antichain of $T_S \restriction \alpha_{\delta,i}$;
\item[(C)] there exists $z \in s_{\alpha_{\delta,i}}$ above $x_j$ such that 
$\pi_S(z) \ro q_{j}$.
\end{itemize}
First, assume that at least one of these statements is false. 
Since $\pi_S(x_j) \ro q_j$, we can choose 
$x_{i}$ above $x_j$ on level $\alpha_{\delta,i}$ of $T_S \restriction \delta$ 
such that $\pi_S(x_i) \ro q_j$. 
Then choose $q_{i} \in \qk$ such that 
$\pi_S(x_i) \ro q_{i} \ro q_j$.

Secondly, assume that the three statements are true. 
Fix $z$ as in (C). 
Then fix $x_{i}$ above $z$ on level $\alpha_{\delta,i}$ of $T_S \restriction \delta$ 
such that $\pi_S(x_i) \ro q_j$. 
Finally, choose $q_{i} \in \qk$ such that 
$\pi_S(x_i) \ro q_{i} \ro q_j$. 
This completes the definition of $x_i$ and $q_i$ when $i < \ka$ is a successor ordinal.

\bigskip

\noindent Case 2: Assume that $i < \ka$ 
is a limit ordinal and $x_j$ and $q_j$ have been defined for all $j < i$. 
Let $x_i$ be the least upper bound of $\{ x_j : j < i \}$, which is on 
level $\alpha_{\delta,i}$ of $T_S \restriction \delta$. 
Since $\cf(\alpha_{\delta,i}) < \ka$, $\pi_S(x_i) \in \qk$. 
Let $q_i^*$ be the greatest lower bound of $\{ q_j : j < i \}$. 
Then $\pi_S(x_i) \le_{\rk} q_i^*$. 
By Lemma 1.3(3), $q_i^* \notin \qk$, so $\pi_S(x_i) \ro q_i^*$. 
Choose $q_i \in \qk$ with $\pi_S(x_i) \ro q_i \ro q_i^*$.

\bigskip

This completes the construction of the sequences $\langle x_i : i < \ka \rangle$ 
and $\langle q_i : i < \ka \rangle$. 
Define $b(x,q)$ to be the downward closure of the chain $\{ x_i : i < \ka \}$. 
Note that $b(x,q)$ is a cofinal branch of $T_S \restriction \delta$. 
Now put some $y$ above the branch $b(x,q)$ on level $\delta$ of $T_S$ 
and define $\pi_S(y) := \inf \{ q_i : i < \ka \}$. 
Then $\pi_S(y) \ro q_i$ for all $i < \ka$, and hence $\pi_S(y) \ro q$.

\bigskip

This completes the construction of $T_S$ and $\pi_S$. 
It is clear that $T_S$ is a normal $\ka$-complete $\rk$-embeddable $\kp$-Aronszajn tree 
satisfying that $T_S \restriction (\kp \setminus S)$ is special. 
It remains to prove that for any antichain $A \subseteq T_S$, the set 
$\{ \h_{T_S}(x) : x \in A \} \cap S$ is non-stationary.

Let $A \subseteq T_S$ be an antichain. 
For each $q \in \qk$, define $C_q$ to be the set of all $\delta < \kp$ satisfying that 
for all $x \in T_S \restriction \delta$, 
if there exists some $y \in A$ above $x$ such that 
$\pi_S(y) \ro q$, then there exists such a $y$ in $T_S \restriction \delta$.
Let $C := \bigcap \{ C_q : q \in \qk \} \cap \{ \alpha \in \ka : \kappa \cdot \alpha = \alpha \}$. 
Since $\qk$ has size $\ka$, $C$ is a club subset of $\kp$.

By the property described in Lemma 1.10, 
fix a club $D \subseteq \kp$ 
such that for all $\delta \in D \cap S$, the set 
$\{ i \in \ka \cap \Succ : \alpha_{\delta,i} \in C, 
\ A \cap \alpha_{\delta,i} = s_{\alpha_{\delta,i}} \}$ is cofinal in $\ka$.

We claim that 
$\{ \h_{T_S}(x) : x \in A \} \cap S \cap D = \emptyset$, 
which will complete the proof. 
Suppose for a contradiction that $\delta$ is in this intersection. 
Fix $y \in A$ such that $\h_{T_S}(y) = \delta$. 
By the definition of level $\delta$ of $T_S$, there exists some $x \in T_S \restriction \delta$ 
and $q \in \qk$ such that $y$ is the unique upper bound of $b(x,q)$ on level $\delta$.

Let $i^*$, $\langle x_i : i < \ka \rangle$, and $\langle q_i : i < \ka \rangle$ be 
as in the definition of $b(x,q)$. 
Recall that $\pi_S(y) \ro q_i$ for all $i < \ka$. 
Fix $i = j + 1 \in \ka \cap \Succ$ greater than $i^*$ such that 
$\alpha_{\delta,i} \in C$ and $A \cap \alpha_{\delta,i} = s_{\alpha_{\delta,i}}$.

Now $y \in A$, $x_{j} <_{T_S} y$, and $\pi_S(y) \ro q_{j}$. 
Since $\alpha_{\delta,i} \in C \subseteq C_{q_j}$, 
there exists 
$
z \in A \cap (T_S \restriction \alpha_{\delta,i}) = A \cap \alpha_{\delta,i} = s_{\alpha_{\delta,i}} 
$
such that 
$x_{j} <_{T_S} z$ and $\pi_S(z) \ro q_{j}$. 
By definition, $x_{i}$ is above such a $z$. 
So $z <_{T_S} x_i <_{T_S} y$, and hence $z <_{T_S} y$. 
This is a contradiction because both $y$ and $z$ are in $A$ and $A$ is an antichain. 
This completes the proof of Theorem 3.1.

\smallskip

Finally, we point out that the assumptions of Theorem 3.1 
hold in a variety of models in addition to one satisfying $\Diamond^*(\kp \cap \cof(\ka))$.  
Assuming $2^{\ka} = \kp$ and $\ka^{<\ka} = \ka$, \cite[Theorem 6.1]{rinot23} shows that 
$\boxtimes_\ka^*(\text{NS}^+ \restriction (\kp \cap \cof(\ka)))$ holds 
in any of the following generic extensions:
\begin{enumerate}
	\item any generic extension by $\add(\ka,1)$;
	\item if $\chi > \kappa$ is strongly inaccessible, any generic extension by 
	a $(<\!\kappa)$-distributive $\chi$-c.c. forcing which collapses $\chi$ to become $\kp$;
	\item any generic extension by a 
	$\kp$-c.c.\! forcing of size $\le\kappa^+$ which preserves the regularity of $\kappa$ 
	and is not ${}^\kappa\kappa$-bounding.
\end{enumerate}
In particular, in any such generic extension 
there exists a pairwise far family of $2^{(\kp)}$ many 
normal $\ka$-complete $\rk$-embeddable $\kp$-Aronszajn trees.

\bibliographystyle{plain}
\bibliography{paper38}

\end{document}